\newtheorem{theorem}{Theorem}[section]
\newtheorem{lemma}[theorem]{Lemma}
\newtheorem{remark}[theorem]{Remark}
\newcommand{\T}{\mbox{\sf true}}
\newcommand{\F}{\mbox{\sf false}}
\newcommand{\IMP}{\mbox{$\to$}} 
\newcommand{\BOT}{\mbox{$\bot$}}
\newcommand{\GA}{{\mit \Gamma}}
\newcommand{\DE}{{\mit \Delta}}
\newcommand{\SI}{{\mit \Sigma}}
\newcommand{\PI}{{\mit \Pi}}
\newcommand{\PH}{{\mit \Phi}}
\newcommand{\PS}{{\mit \Psi}}
\newcommand{\TA}{{\mit \Theta}}
\newcommand{\SEQ}[2]{\mbox{${#1}\Rightarrow{#2}$}}
\newcommand{\SSEQ}[2]{\mbox{${#1}\Rrightarrow{#2}$}}
\newcommand{\SSSEQ}[2]{\mbox{${#1}\blacktriangleright{#2}$}}
\newcommand{\GL}{{\bf GL}}
\newcommand{\GLS}{{\bf GLS}}
\newcommand{\SF}[1]{{\rm Sub}(#1)}
\newcommand{\SGL}{{\bf GL}_{\rm seq}}
\newcommand{\SGLS}{{\bf GLS}_{\rm seq}}
\newcommand{\SGLGLS}{{\bf GL}({\bf S})_{\rm seq}}
\newcommand{\CFvdash}{\vdash_{\rm CutFree}}
\begin{document}

\title{Semantical cut-elimination for\\ the provability logic of true arithmetic}
\author{
Ryo Kashima\thanks{
Department of Mathematical and Computing Science, 
Tokyo Institute of technology.
{\tt kashima@is.titech.ac.jp}
}
\and Yutaka Kato\thanks{
Department of Mathematical and Computing Science, 
Tokyo Institute of technology.
}
}
\date{December 2022}
\maketitle

\begin{abstract}
The quasi-normal modal logic {\bf GLS} is a provability logic formalizing the arithmetical truth.
Kushida (2020) gave a sequent calculus for {\bf GLS} and proved the cut-elimination theorem.
This paper introduces semantical characterizations of {\bf GLS}
and gives a semantical proof of the cut-elimination theorem.
These characterizations can be generalized to other quasi-normal modal logics.
\\
\\
{\bf Keywords:} GLS, provability logic, quasi-normal modal logic, semantics, cut-elimination
\end{abstract}

\section{Introduction}
\label{sec:intro}

The provability logic {\GLS} is an extension of the well-known provability logic {\GL}.
\footnote{
Following \cite{Boolos}, we use the name {\GLS},
while it is called {\bf S} and $G'$ in \cite{CZ,Solovay}.
Note that, in \cite{Gore}, {\bf GLS} is used as the name of sequent calculus for {\bf GL}.}
The axioms of {\GLS} are all theorems of {\GL} and all formulas $\Box\alpha \IMP \alpha$;
its sole inference rule is modus ponens. 
The importance of {\GLS} lies in the following two.
First, {\GLS} is complete with respect to the provability interpretation into the standard model of arithmetic.
See \cite{Boolos,Solovay} for details.
Second, {\GLS} is probably the best-known simple example 
of modal logics that are not normal but quasi-normal,
where quasi-normal modal logics are not required to be closed under the necessitation rule.
See \cite{CZ} for details of quasi-normal modal logics.

Recently a proof-theoretical study on {\GLS} was given by Kushida \cite{Kushida}.
He gave a natural sequent calculus by an ingenious way of using two kinds of sequents
and proved the cut-elimination theorem syntactically.

On the other hand, the following condition C0 has been known 
to be a semantical characterization of a formula $\varphi$ to be a theorem of {\GLS}.
\begin{itemize}
\item[C0.]
$\varphi$ is true at any $\SF{\varphi}$-reflexive world in any GL-model, 
where a GL-model is a transitive and converse well-founded Kripke-model,
and a world is said to be $\SF{\varphi}$-reflexive if the formula $\Box \alpha \IMP \alpha$
is true for any subformula $\Box\alpha$ of $\varphi$.
\end{itemize}
This paper presents further characterizations:
\begin{itemize}
\item[C1.]
There is a finite set $\SI$ of formulas such that $\varphi$ is true at any $\SI$-reflexive world in any GL-model,
where a world is said to be $\SI$-reflexive if the formula $\Box \alpha \IMP \alpha$
is true for any formula $\Box\alpha$ in $\SI$.

\item[C2.]
For any GL-model, there is a finite set $\SI$ of formulas such that $\varphi$ is true at any $\SI$-reflexive world.

\item[C3.]
For any infinitely descending sequence $w_1 R^{-1} w_2 R^{-1} w_3 \cdots$ in 
any GL-model, there is $i$ such that $\varphi$ is true at $w_j$ for all $j \geq i$.
($R$ is the accessibility relation of the GL-model.) 

\item[C4.]
For any infinitely descending sequence $w_1 R^{-1} w_2 R^{-1} w_3 \cdots$ in 
any GL-model, there is $i$ such that $\varphi$ is true at $w_i$.
\end{itemize}
Note that the implications 
`C0 $\Rightarrow$ C1 $\Rightarrow$ C2'
and
`C3 $\Rightarrow$ C4'
are trivial.
In Section~\ref{sec:Main}, we show the implications
`$({\GLS}\vdash\varphi) \Rightarrow$ C1',
`C2 $\Rightarrow$ C3',
and 
`C4 $\Rightarrow$ ($\varphi$ is cut-free provable in {\GLS})'.
These induce 
the soundness and completeness of {\GLS} with respect to the characterizations C1--C4,
and a simple semantical proof of the cut-elimination theorem.
Furthermore, we show a generalization of the characterizations to other quasi-normal modal logics.

\begin{remark}
\textup{
The characterization C0 is probably not explicitly stated in the literature, 
but the equivalence `$({\GLS}\vdash\varphi) \Longleftrightarrow$ C0'
is obvious because of the two well-known equivalences
`$({\GLS}\vdash\varphi)$
$\Longleftrightarrow$
$(\GL \vdash 
\bigwedge\{\Box\alpha\IMP\alpha \mid \Box\alpha \in \SF{\varphi}\} \IMP \varphi)$'
and
`$({\GL}\vdash\psi)$
$\Longleftrightarrow$
($\psi$ is valid in any GL-model)'.
}
\end{remark}

\begin{remark}
\textup{
`(${\GLS}\vdash\varphi) \Rightarrow$ C0',
namely the soundness of {\GLS} with respect to the characterization C0,
cannot be shown directly by induction,
whereas `(${\GLS}\vdash\varphi) \Rightarrow$ C1'
can be shown by straightforward induction on the proof of $\varphi$. 
So we cannot do semantical cut-elimination using the characterization C0.
}
\end{remark}

\begin{remark}
\textup{
The characterization C3 is essentially equivalent to Theorem~11.36 of Chagrov{\&}Zakharyaschev \cite{CZ},
which is described using the notion of general frames.
}
\end{remark}

\section{Definitions}

Formulas are constructed from 
propositional variables ($p, q, \ldots$),
propositional constant $\BOT$, 
logical operator $\IMP$,
and modal operator $\Box$.
The other operators are defined as abbreviations as usual.
The letters $\alpha, \beta, \varphi, \psi, \ldots$ denote formulas,
and $\GA, \DE, \PI, \SI, \ldots$ denote sets of formulas.
Parenthesis are omitted as, for example, 
$\Box\varphi \IMP \psi = (\Box\varphi) \IMP \psi$.
$\SF{\varphi}$ denotes the set of all the subformulas of $\varphi$. 
{\sf Fml} denotes the set of all formulas.

If $\GA$ and $\DE$ are finite sets of formulas, the expression $\SEQ{\GA}{\DE}$ is called a {\em sequent}.
As usual, for example, 
`$\SEQ{}{\GA, \varphi, \varphi, \DE}$'
denotes the sequent
$\SEQ{\emptyset}{\GA \cup \DE  \cup \{\varphi\}}.$
The expression $\Box\GA$ denotes the set $\{\Box\gamma \mid \gamma \in \GA\}$.

Sequent calculus ${\SGL}$, which has been well-studied (see \cite{Avron, Gore}), is defined as follows.
\begin{quote}
Initial sequents:
$\SEQ{\varphi}{\varphi}$
\mbox{ and }
$\SEQ{\BOT}{}$

Inference rules:
\[
\infer[\mbox{(cut)}]{\SEQ{\GA}{\DE}}{\SEQ{\GA}{\DE, \varphi} & \SEQ{\varphi, \GA}{\DE}}
\]
\[
\infer[\mbox{(weakening), where $\GA \subseteq \GA'$ and $\DE \subseteq \DE'$.}]{\SEQ{\GA'}{\DE'}}{\SEQ{\GA}{\DE}}
\]
\[
\infer[\mbox{($\IMP$L)}]{\SEQ{\varphi \IMP \psi, \GA}{\DE}}{
 \SEQ{\GA}{\DE, \varphi}
 &
 \SEQ{\psi, \GA}{\DE}
}
\quad
\infer[\mbox{($\IMP$R)}]{\SEQ{\GA}{\DE, \varphi \IMP \psi}}{
 \SEQ{\varphi, \GA}{\DE, \psi}
} 
\]
\[
\infer[\mbox{$(\Box_{\rm GL})$}]
{\SEQ{\Box\GA }{\Box\varphi}}{
  \SEQ{\GA, \Box\GA, \Box\varphi}{\varphi}
}
\]
\end{quote}
In other words, $\SGL$ is obtained from the sequent calculus {\bf LK} for classical propositional logic, by adding the rule $(\Box_{\rm GL})$.

Sequent calculus ${\SGLS}$, which is a slightly modified version of the calculus originated by Kushida \cite{Kushida}, is defined as follows.
${\SGLS}$ uses two kinds of sequents, {\em first level} and {\em second level}, 
while ${\SGL}$ uses only first level sequents.
Second level sequents are written using the symbol $\Rrightarrow$ instead of $\Rightarrow$.
${\SGLS}$ is defined by adding the following initial sequents and inference rules to ${\SGL}$.
\begin{quote}
Additional initial sequents:
$\SSEQ{\varphi}{\varphi}$
\mbox{ and }
$\SSEQ{\BOT}{}$

Additional inference rules:
\[
\infer[\mbox{(cut)}]{\SSEQ{\GA}{\DE}}{\SSEQ{\GA}{\DE, \varphi} & \SSEQ{\varphi, \GA}{\DE}}
\]
\[
\infer[\mbox{(weakening), where $\GA \subseteq \GA'$ and $\DE \subseteq \DE'$.}]{\SSEQ{\GA'}{\DE'}}{\SSEQ{\GA}{\DE}}
\]
\[
\infer[\mbox{($\IMP$L)}]{\SSEQ{\varphi \IMP \psi, \GA}{\DE}}{
 \SSEQ{\GA}{\DE, \varphi}
 &
 \SSEQ{\psi, \GA}{\DE}
}
\quad
\infer[\mbox{($\IMP$R)}]{\SSEQ{\GA}{\DE, \varphi \IMP \psi}}{
 \SSEQ{\varphi, \GA}{\DE, \psi}
} 
\]
\[
\infer[\mbox{($\Box$L), where 
$\Box\varphi$ is called the {\em principal formula} of this rule.}]
{\SSEQ{\Box\varphi, \GA }{\DE}}{
  \SSEQ{\varphi, \GA}{\DE}
}
\]
\[
\infer[\mbox{($\Rightarrow\Rrightarrow$)}]
{\SSEQ{\GA }{\DE}}{
  \SEQ{\GA}{\DE}
}
\]
\end{quote}
In other words, $\SGLS$ is obtained from {\bf LK} for both first and second level sequents, by adding the rules $(\Box_{\rm GL})$ on first level sequents, ($\Box$L) on second level sequents, and ($\Rightarrow\Rrightarrow$) which lifts the level of sequents.

We use the symbol $\blacktriangleright$ to denote $\Rightarrow$ or $\Rrightarrow$.
We write `$\SGLGLS \vdash \SSSEQ{\GA}{\DE}$' (or, `$\SGLGLS \CFvdash \SSSEQ{\GA}{\DE}$')
if the sequent $\SSSEQ{\GA}{\DE}$ is provable (or, provable without using the rule (cut), respectively)
in $\SGLGLS$.

$\SGLS$ is a conservative extension of $\SGL$; 
that is, provability of first level sequents are equivalent between two calculus.
This comes from the one-wayness of the rule ($\Rightarrow\Rrightarrow$).
Thus we have the following.
\begin{quote}
$
(\SGLS \vdash \SEQ{}{\varphi})
\Longleftrightarrow
(\SGL \vdash \SEQ{}{\varphi})
\Longleftrightarrow
\mbox{($\varphi$ is a theorem of {\GL})}.
$

$
(\SGLS \vdash \SSEQ{}{\varphi})
\Longleftrightarrow 
\mbox{($\varphi$ is a theorem of {\GLS})}.
$
\end{quote}

The cut-elimination theorem holds for these calculi;
see 
\cite{Gore} for syntactical proof for ${\SGL}$,
\cite{Avron} for semantical proof for ${\SGL}$,
and 
\cite{Kushida} for syntactical proof for ${\SGLS}$.
Semantical proof for ${\SGLS}$ is given by this paper.

\medskip

By {\em GL-model}, we mean a transitive and converse well-founded Kripke model.
That is,  $\langle W, R, V\rangle$ is a GL-model
if 
$W$ is a non-empty set of worlds,
$R \subseteq W \times W$ is transitive,
there is no infinitely ascending sequence $x_1 R x_2 R x_3 \cdots$,
and 
$V: W \times {\sf Fml}  \to \{\T, \F\}$ is a valuation
that satisfies the following.
$V(w,\BOT) = \F$.
$V(w, \varphi\IMP\psi) = \T
\Longleftrightarrow
V(w, \varphi) = \F
\mbox{ or }
V(w, \psi) = \T.$
$V(w,\Box\varphi) = \T
\Longleftrightarrow
(\forall w')(wRw' \Rightarrow  V(w',\varphi) = \T).$
Truth of a sequent $\SSSEQ{\GA}{\DE}$ is defined by truth of the formula $\bigwedge\GA \IMP \bigvee \DE$;
that is, 
$V(w, (\SSSEQ{\GA}{\DE})) = \T
\Longleftrightarrow
(\exists \gamma \in \GA)(V(w,\gamma) = \F)$ or $(\exists \delta \in \DE)(V(w,\delta)= \T)$.

Let $\SI$ be a set of formulas.
We say that a world $w$ is {\em $\SI$-reflexive} if and only if for any $\Box\alpha \in \SI$, $V(w, \Box\alpha \IMP \alpha) = \T$.

\section{Results}
\label{sec:Main}

\begin{theorem}
\label{Th:Main}
For any second level sequent $\SSEQ{\PS}{\PH}$, the following six conditions are equivalent.
\begin{enumerate}
\item\label{item:A}
There is a finite set $\SI$ of formulas such that for any GL-model $\langle W, R, V\rangle$  and any $\SI$-reflexive world $w \in W$, $V(w, (\SSEQ{\PS}{\PH})) = \T$.

\item\label{item:B}
For any GL-model $\langle W, R, V\rangle$, there is a finite set $\SI$ of formulas such that for any $\SI$-reflexive world $w \in W$, $V(w, (\SSEQ{\PS}{\PH})) = \T$.

\item\label{item:C}
For any GL-model $\langle W, R, V\rangle$ and any infinitely descending sequence \\
$w_1 R^{-1} w_2 R^{-1} w_3 \cdots$ in $W$, there is a number $i$ such that for any $j\geq i$, $V(w_j, (\SSEQ{\PS}{\PH})) = \T$.

\item\label{item:D}
For any GL-model $\langle W, R, V\rangle$ and any infinitely descending sequence \\
$w_1 R^{-1} w_2 R^{-1} w_3 \cdots$ in $W$, there is a number $i$ such that $V(w_i, (\SSEQ{\PS}{\PH})) = \T$.

\item\label{item:E}
$\SGLS \CFvdash \SSEQ{\PS}{\PH}$.

\item\label{item:F}
$\SGLS \vdash \SSEQ{\PS}{\PH}$.
\end{enumerate}
\end{theorem}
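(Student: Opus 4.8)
The plan is to close the cycle $(\ref{item:F})\Rightarrow(\ref{item:A})\Rightarrow(\ref{item:B})\Rightarrow(\ref{item:C})\Rightarrow(\ref{item:D})\Rightarrow(\ref{item:E})\Rightarrow(\ref{item:F})$. Three of these links are immediate: $(\ref{item:A})\Rightarrow(\ref{item:B})$ reuses the same $\SI$ for every model, $(\ref{item:C})\Rightarrow(\ref{item:D})$ specializes $j=i$, and $(\ref{item:E})\Rightarrow(\ref{item:F})$ holds since a cut-free proof is a proof. So it suffices to treat the soundness link $(\ref{item:F})\Rightarrow(\ref{item:A})$, the combinatorial link $(\ref{item:B})\Rightarrow(\ref{item:C})$, and the completeness-with-cut-elimination link $(\ref{item:D})\Rightarrow(\ref{item:E})$.

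For $(\ref{item:F})\Rightarrow(\ref{item:A})$ I would induct on a given $\SGLS$-derivation of $\SSEQ{\PS}{\PH}$, taking $\SI$ to be the finite set of all principal formulas of the applications of $(\Box\mathrm{L})$ occurring in it, and proving that every second level sequent in the derivation is true at every $\SI$-reflexive world. The propositional rules and (cut) are sound locally at each fixed world; the rule $(\Rightarrow\Rrightarrow)$ is absorbed by the known soundness of $\SGL$ (any $\SGL$-provable first level sequent is true at every world of every GL-model, hence in particular at $\SI$-reflexive ones); and an application of $(\Box\mathrm{L})$ with principal formula $\Box\varphi$ is sound at a world $w$ precisely because $\SI$-reflexivity gives $V(w,\Box\varphi\IMP\varphi)=\T$, which carries truth from the premise to the conclusion. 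Since $\Box\varphi\in\SI$ for every such application, the induction goes through.

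For $(\ref{item:B})\Rightarrow(\ref{item:C})$, fix a GL-model and a descending sequence $w_1 R^{-1} w_2 R^{-1}\cdots$, and let $\SI$ be the finite witness provided by $(\ref{item:B})$. The point is that each $\Box\alpha\in\SI$ can falsify $\Box\alpha\IMP\alpha$ at most once along the sequence: by transitivity $w_{j'} R w_j$ whenever $j'>j$, so $V(w_{j'},\Box\alpha)=\T$ forces $V(w_j,\alpha)=\T$; hence two distinct indices at which $\Box\alpha$ is true but $\alpha$ is false would be contradictory. As $\SI$ is finite, only finitely many $w_j$ fail to be $\SI$-reflexive, and choosing $i$ greater than all of them makes every $w_j$ with $j\geq i$ $\SI$-reflexive, so $\SSEQ{\PS}{\PH}$ is true there.

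The hard part will be $(\ref{item:D})\Rightarrow(\ref{item:E})$, which carries both completeness and cut-elimination. I would argue contrapositively: assuming $\SGLS\not\CFvdash\SSEQ{\PS}{\PH}$, build a GL-model together with an infinite descending sequence $w_1 R^{-1} w_2 R^{-1}\cdots$ falsifying the sequent at every $w_i$, contradicting $(\ref{item:D})$. Cut-free proof search saturates $\SSEQ{\PS}{\PH}$ under the invertible propositional rules and under $(\Box\mathrm{L})$, yielding a pair $(\PS^{*},\PH^{*})$ with no axiom and satisfying the closure $\Box\varphi\in\PS^{*}\Rightarrow\varphi\in\PS^{*}$; this last closure is exactly reflexivity of the left boxes. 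Dropping to the first level through $(\Rightarrow\Rrightarrow)$, the sequent $\SEQ{\PS^{*}}{\PH^{*}}$ is cut-free unprovable in $\SGL$, so the semantical cut-elimination for $\SGL$ (see \cite{Avron}) furnishes a rooted GL-countermodel whose subtrees refute the right-hand boxes while keeping the left context true, this being guaranteed by the premise shape of $(\Box_{\mathrm{GL}})$. The delicate and decisive step is then to stack infinitely many copies of this root situation into one frame, with $w_{j+1} R w_j$ and all $w_j$ carrying the valuation read off from $(\PS^{*},\PH^{*})$, and to verify that transitivity and converse well-foundedness survive and that along the whole chain every formula of $\PS^{*}$ stays true and every formula of $\PH^{*}$ stays false; in particular that each left box $\Box\varphi\in\PS^{*}$ remains true (using $\varphi\in\PS^{*}$ and transitivity) and each right box remains false. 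Making the modal clauses hold uniformly at every world of the chain, rather than only at its root, is where the essential difficulty lies, and it is precisely the phenomenon that forces the semantics to speak of infinite descending sequences rather than single reflexive worlds.
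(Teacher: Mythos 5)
Your proposal is correct and follows essentially the same route as the paper: the same cycle of implications, the same choice of $\SI$ as the principal formulas of ($\Box$L) in the given proof for `$\ref{item:F}\Rightarrow\ref{item:A}$', and the same countermodel for `$\ref{item:D}\Rightarrow\ref{item:E}$' (saturate, drop to the first level via $(\Rightarrow\Rrightarrow)$, take Avron's canonical GL-model, and graft an infinite descending chain of copies of the root's valuation on top). Your `$\ref{item:B}\Rightarrow\ref{item:C}$' argument --- each $\Box\alpha\in\SI$ can falsify $\Box\alpha\IMP\alpha$ at most once along the chain --- is a minor repackaging of the paper's Lemma~\ref{lm:reflexive} and is equally valid.
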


Note that 
the implications 
`$\ref{item:A}\Rightarrow\ref{item:B}$', 
`$\ref{item:C}\Rightarrow\ref{item:D}$', 
and 
`$\ref{item:E}\Rightarrow\ref{item:F}$' 
are trivial.
In the following, we show 
`$\ref{item:F}\Rightarrow\ref{item:A}$', 
`$\ref{item:B}\Rightarrow\ref{item:C}$', 
and 
`$\ref{item:D}\Rightarrow\ref{item:E}$'.

\begin{proof}[Proof of ` $\ref{item:F} \Rightarrow \ref{item:A}$' (Soundness of $\SGLS$)]
Suppose the condition \ref{item:F} holds;
that is, there is a proof ${\cal P}$ of $\SSEQ{\PS}{\PH}$ in $\SGLS$.
We define $\SI$ to be the set of principal formulas of all ($\Box$L) rules in ${\cal P}$.
Then, for any subproof ${\cal P}'$ of ${\cal P}$, we can show the following by induction on the size of ${\cal P'}$:
{\em 
If the conclusion of ${\cal P'}$ is $\SEQ{\GA}{\DE}$, then $V(w, (\SEQ{\GA}{\DE}))=\T$
for any GL-model $\langle W, R, V\rangle$ and any world $w$.
If the conclusion of ${\cal P'}$ is $\SSEQ{\GA}{\DE}$, then $V(w, (\SSEQ{\GA}{\DE}))=\T$
for any GL-model $\langle W, R, V\rangle$ and any $\SI$-reflexive world $w$.}
\end{proof}

\begin{lemma}
\label{lm:reflexive}
For any formula $\alpha$, any GL-model $\langle W, R, V\rangle$, and any infinitely descending sequence $w_1 R^{-1} w_2 R^{-1} w_3 \cdots$ in $W$, there is a number $i$ such that for any $j\geq i$, $V(w_j, \Box\alpha\IMP\alpha) = \T$.
\end{lemma}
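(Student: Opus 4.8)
The plan is to unwind the two pieces of data—the truth condition for $\Box\alpha\IMP\alpha$ and the meaning of the descending sequence—and then observe that transitivity of $R$ permits at most one world along the sequence to falsify $\Box\alpha\IMP\alpha$. First I would record what everything means. By the truth condition for $\IMP$, a world $w$ has $V(w,\Box\alpha\IMP\alpha)=\F$ exactly when $V(w,\Box\alpha)=\T$ and $V(w,\alpha)=\F$; call such a $w$ \emph{bad}. By the definition of $R^{-1}$, the chain $w_1 R^{-1} w_2 R^{-1} w_3\cdots$ asserts $w_{k+1} R w_k$ for every $k$, and since $R$ is transitive this upgrades to $w_j R w_i$ for all $j>i$.

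The key step is to show that there is at most one index $j$ for which $w_j$ is bad. Suppose toward a contradiction that $a<b$ with both $w_a$ and $w_b$ bad. From $b>a$ we have $w_b R w_a$, so $V(w_b,\Box\alpha)=\T$ together with the truth condition for $\Box$ forces $V(w_a,\alpha)=\T$. But $w_a$ being bad means $V(w_a,\alpha)=\F$, a contradiction. Hence the set of bad indices is empty or a singleton.

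Finally I would take $i$ to be one more than the unique bad index, or $i=1$ if there is no bad index; then no $w_j$ with $j\geq i$ is bad, which is precisely $V(w_j,\Box\alpha\IMP\alpha)=\T$ for all $j\geq i$, as required. I expect no real obstacle here: the only point that needs care is reading the direction of $R$ and $R^{-1}$ correctly so as to obtain $w_b R w_a$ for $b>a$. It is worth noting that the argument uses only the transitivity of $R$ and not converse well-foundedness; the bound on the number of bad worlds is then immediate.
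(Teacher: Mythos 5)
Your proof is correct and rests on the same single observation as the paper's: since $w_j R w_n$ for $j>n$ by transitivity, a failure of $\alpha$ at some $w_n$ forces $V(w_j,\Box\alpha)=\F$ for all later $j$, so $\Box\alpha\IMP\alpha$ can fail at most once along the chain. The paper packages this as a two-case analysis (either $\alpha$ holds at every $w_n$, or it fails at some $w_n$) rather than as ``at most one bad index,'' but the content is identical, and your remark that only transitivity is needed applies equally to the paper's argument.
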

\begin{proof}
If $V(w_n, \alpha)=\T$ for all $n$, then $V(w_j, \Box\alpha\IMP\alpha)=\T$ for all $j \geq 1$.
If $V(w_n, \alpha)=\F$ for some $n$, then for any $j\geq (n+1)$, $V(w_j, \Box\alpha)=\F$
(therefore $V(w_j, \Box\alpha\IMP\alpha)=\T$) because $w_j R w_n$.
\end{proof}

\begin{proof}[Proof of ` $\ref{item:B} \Rightarrow \ref{item:C}$']
Given any GL-model $\langle W, R, V\rangle$ and any infinitely descending sequence $w_1 R^{-1} w_2 R^{-1} w_3 \cdots$, we get the finite set $\SI$ by the condition \ref{item:B}.
Then, by Lemma~\ref{lm:reflexive}, there is a sufficiently large $n$ such that $w_j$ is $\SI$-reflexive for any $j\geq n$.
Therefore, by the condition \ref{item:B}, we have $V(w_j, (\SSEQ{\PS}{\PH}))=\T$ for any $j\geq n$.
\end{proof}

For the proof of `$\ref{item:D} \Rightarrow \ref{item:E}$', we arbitrarily fix a sequent $\SSEQ{\PS}{\PH}$.
Then we will henceforth consider only sequents consisting of subformulas of $\PS, \PH$.
In other words, {\em when we write a sequent $\SSSEQ{\GA}{\DE}$, the condition $\GA\cup\DE \subseteq \SF{\PS,\PH}$ is automatically assumed from now on.}

We define conditions on a sequent $\SSSEQ{\GA}{\DE}$.
\begin{quote}
(${\IMP}$L) \ 
If $\varphi\IMP\psi \in \GA$, then $\varphi \in \DE$ or $\psi \in \GA$.
\\
(${\IMP}$R) \ 
If $\varphi\IMP\psi \in \DE$, then $\varphi \in \GA$ and $\psi \in \DE$.
\\
(${\Box}$L) \ 
If $\Box\varphi \in \GA$, then $\varphi \in \GA$.
\end{quote}
We say that a first level sequent $\SEQ{\GA}{\DE}$ is {\em saturated} if and only if the conditions 
(${\IMP}$L) and (${\IMP}$R) hold for any formulas $\varphi$ and $\psi$.
We say that a second level sequent $\SSEQ{\GA}{\DE}$ is {\em saturated} if and only if all the above conditions hold for any formulas $\varphi$ and $\psi$.

The following lemma is a standard tool for semantical cut-elimination.

\begin{lemma}
\label{lm:saturation}
If $\SGLS \not\CFvdash \SSSEQ{\GA}{\DE}$, then there is a saturated sequent $\SSSEQ{\GA^+}{\DE^+}$ such that $\GA \subseteq \GA^+$, $\DE \subseteq \DE^+$, and $\SGLS \not\CFvdash \SSSEQ{\GA^+}{\DE^+}$.
\end{lemma}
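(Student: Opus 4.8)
The plan is to construct $\SSSEQ{\GA^+}{\DE^+}$ by repeatedly repairing violations of the saturation conditions, one formula at a time, always staying inside the finite set $\SF{\PS,\PH}$ and always preserving cut-free unprovability. The engine of the argument is the contrapositive reading of the logical rules: each of ($\IMP$L), ($\IMP$R), and ($\Box$L) preserves cut-free provability from premises to conclusion, so a cut-free \emph{unprovable} conclusion forces at least one premise to be cut-free unprovable as well. Since our fixed sequent $\SSEQ{\PS}{\PH}$ confines attention to subformulas of $\PS,\PH$, only finitely many formulas are ever available, and this finiteness will yield termination.

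First I would spell out the three repair steps, applied to the current (cut-free unprovable) sequent $\SSSEQ{\GA}{\DE}$ whenever a condition fails. If ($\IMP$R) fails, so that $\varphi\IMP\psi\in\DE$ while $\varphi\notin\GA$ or $\psi\notin\DE$, then because $\varphi\IMP\psi\in\DE$ the conclusion of ($\IMP$R) over the context $\GA,\DE$ is literally $\SSSEQ{\GA}{\DE}$; were its premise $\SSSEQ{\varphi,\GA}{\DE,\psi}$ cut-free provable, so would be $\SSSEQ{\GA}{\DE}$, so I may pass to $\SSSEQ{\varphi,\GA}{\DE,\psi}$. If ($\Box$L) fails on a second level sequent, so that $\Box\varphi\in\GA$ while $\varphi\notin\GA$, then $\Box\varphi\in\GA$ makes the conclusion of ($\Box$L) equal to $\SSEQ{\GA}{\DE}$, and cut-free provability of the premise $\SSEQ{\varphi,\GA}{\DE}$ would propagate downward, so I may pass to $\SSEQ{\varphi,\GA}{\DE}$. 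If ($\IMP$L) fails, so that $\varphi\IMP\psi\in\GA$ while $\varphi\notin\DE$ and $\psi\notin\GA$, then the two premises $\SSSEQ{\GA}{\DE,\varphi}$ and $\SSSEQ{\psi,\GA}{\DE}$ have conclusion $\SSSEQ{\GA}{\DE}$; since they cannot both be cut-free provable, I pass to whichever premise is cut-free unprovable.

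In every step the added formula is a subformula of a formula already present, hence still in $\SF{\PS,\PH}$, and the precise negation of the failed condition guarantees that the added formula is new to its side; therefore $|\GA|+|\DE|$ strictly increases while remaining bounded by $2\,|\SF{\PS,\PH}|$. Consequently the process halts after finitely many steps at a sequent $\SSSEQ{\GA^+}{\DE^+}$ with $\GA\subseteq\GA^+$ and $\DE\subseteq\DE^+$, which has no remaining violations and is therefore saturated, and which still satisfies $\SGLS\not\CFvdash\SSSEQ{\GA^+}{\DE^+}$, as required.

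The point needing most care, and the closest thing to an obstacle, is the joint bookkeeping that each repair preserves cut-free unprovability and also makes progress. Preservation relies on the principal formula already lying on the relevant side, so that the rule's conclusion coincides exactly with the current sequent and no weakening is needed; progress relies on reading off a genuinely new formula from the negated condition. The ($\IMP$L) step is the only nondeterministic one, since there a choice between the two premises must be made, but this is harmless because the lemma asserts only the existence of a saturated extension, not its uniqueness.
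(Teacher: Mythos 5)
Your proposal is correct and is exactly the argument the paper intends: the paper's own proof is a one-sentence sketch (``appropriately adding formulas while preserving cut-free unprovability''), and you have filled in the standard details --- the contrapositive use of ($\IMP$L), ($\IMP$R), ($\Box$L) with the principal formula already present so the conclusion coincides with the current sequent, plus termination via the bound $2\,|\SF{\PS,\PH}|$.
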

\begin{proof}
$\SSSEQ{\GA^+}{\DE^+}$ can be obtained by appropriately adding formulas to $\SSSEQ{\GA}{\DE}$ while preserving cut-free unprovability.
\end{proof}

\begin{proof}[Proof of ` $\ref{item:D} \Rightarrow \ref{item:E}$' (Cut-free Completeness of $\SGLS$)]
Suppose $\SGLS \not\CFvdash \SSEQ{\PS}{\PH}$.
We show that  there is a GL-model $\langle W, R, V\rangle$ and an infinitely descending sequence $w_1 R^{-1} w_2 R^{-1} w_3 \cdots$ such that $V(w_i, (\SSEQ{\PS}{\PH}))=\F$ for any $i$.

First, we follow the well-known proof of cut-free completeness of $\SGL$ by Avron \cite{Avron}.
We define $\langle W_0, R_0, V_0\rangle$ as below.
\begin{quote}
$W_0 = 
\{\SEQ{\GA}{\DE} \mid
\SGLS\not\CFvdash\SEQ{\GA}{\DE} \mbox{ and $\SEQ{\GA}{\DE}$ is saturated}\}.$

$(\SEQ{\GA}{\DE})R_0(\SEQ{\GA'}{\DE'})$
$\Longleftrightarrow$
$\GA_{\Box} \subsetneq \GA'_{\Box}$ and $\GA_{\Box} \subseteq \GA'$, 
where for a set $\TA$, we define $\TA_{\Box} = \{\varphi \mid \Box\varphi \in \TA\}$.

$V_0((\SEQ{\GA}{\DE}), p) = \T$
$\Longleftrightarrow$
$p \in \GA$.
\end{quote}
We can show that $R_0$ is transitive and converse well-founded.
Moreover, the following  hold for any formula $\varphi$ and any world $(\SEQ{\GA}{\DE}) \in W_0$.
\begin{equation}
\mbox{If $\varphi \in \GA$, then $V_0((\SEQ{\GA}{\DE}), \varphi) = \T$.}
\label{eq:truth}
\end{equation}
\begin{equation}
\mbox{If $\varphi \in \DE$, then $V_0((\SEQ{\GA}{\DE}), \varphi) = \F$.}
\label{eq:false}
\end{equation}
These are simultaneously proved by induction on $\varphi$.
When $\varphi = \Box\psi \in \DE$, we apply Lemma~\ref{lm:saturation} 
to $\SEQ{\GA_{\Box}, \Box(\GA_{\Box}), \Box\psi}{\psi}$
in order to show the existence of $\SEQ{\GA'}{\DE'}$ such that $(\SEQ{\GA}{\DE})R_0(\SEQ{\GA'}{\DE'})$ and 
$\psi \in \DE'$.

Next, we apply Lemma~\ref{lm:saturation} to $\SSEQ{\PS}{\PH}$, and we get a saturated sequent $\SSEQ{\PS^+}{\PH^+}$ which is not cut-free provable.
Then, also the first level sequent $\SEQ{\PS^+}{\PH^+}$ is not cut-free provable because of the inference rule $(\Rightarrow\Rrightarrow)$; thus it is an element of $W_0$.

Now the GL-model $\langle W, R, V\rangle$ we want is defined as follows.
\begin{quote}
$W = W_0 \cup {\mathbb N}$, where ${\mathbb N} = \{1, 2, 3, \ldots\}$.

$xRy \Longleftrightarrow 
(x,y \in W_0 \mbox{ and }  xR_0y)
\mbox{ or }
(x \in {\mathbb N} \mbox{ and } y = (\SEQ{\PS^+}{\PH^+}))
\mbox{ or }
(x \in {\mathbb N} \mbox{ and } (\SEQ{\PS^+}{\PH^+})R_0y)
\mbox{ or }
(x,y \in {\mathbb N} \mbox{ and } x>y).
$

$V((\SEQ{\GA}{\DE}), p) = V_0((\SEQ{\GA}{\DE}), p)$ and $V(n,p) = V_0((\SEQ{\PS^+}{\PH^+}), p)$.
\end{quote}
It is easy to show that $R$ is transitive and converse well-founded.
Moreover, 
for any world $(\SEQ{\GA}{\DE}) \in W_0$ and any formula $\varphi$, 
we have $V((\SEQ{\GA}{\DE}), \varphi) = V_0((\SEQ{\GA}{\DE}), \varphi)$ 
because the difference between two relations $R$ and $R_0$ occurs only in $(n, -)$;
thus the properties (\ref{eq:truth}) and (\ref{eq:false}) hold also for $V$.

We show the following for any $n \in {\mathbb N}$ and any formula $\varphi$.
\begin{equation*}
\mbox{If $\varphi \in \PS^+$, then $V(n, \varphi) = \T$.}
\label{eq:truth-GLS}
\end{equation*}
\begin{equation*}
\mbox{If $\varphi \in \PH^+$, then $V(n, \varphi) = \F$.}
\label{eq:false-GLS}
\end{equation*}
These are simultaneously proved by induction on $\varphi$.
When $\varphi = \Box\psi \in \PS^+$, we use the condition (${\Box}$L) of the saturated second level sequent $\SSEQ{\PS^+}{\PH^+}$
and the property (\ref{eq:truth}) for $\SEQ{\PS^+}{\PH^+}$.

Consequently, $1 R^{-1} 2 R^{-1} 3 R^{-1} \cdots$ is an infinitely descending sequence such that 
$V(n, (\SSEQ{\PS}{\PH})) = \F$ for all $n$.
\end{proof}

\medskip

Finally, we show a generalization of our characterizations.

Let ${\cal F}$ be a class of Kripke frames.
We say that a model $\langle W, R, V\rangle$ is an {\em ${\cal F}$-model}
if $\langle W, R\rangle \in {\cal F}$.
In the statements C1--C4 in Section~\ref{sec:intro},
reading `GL-model' as `${\cal F}$-model' and `$w_1 R^{-1} w_2 R^{-1} w_3 \cdots$' as `$w_n R w_m$ for all $n > m$'
yields four characterizations, say C1$^{\cal F}$--C4$^{\cal F}$.
(Note that $R$ may not be transitive; 
so `$w_1 R^{-1} w_2 R^{-1} w_3 \cdots$' does not imply `$w_n R w_m$ for all $n > m$'.)
Let $L$ be the normal modal logic that is 
sound and complete with respect to ${\cal F}$, 
and $L^+$ be the quasi-normal modal logic 
that is the closure of $L \cup \{\Box\alpha\IMP\alpha \mid \alpha \in {\sf Fml}\}$ under modus ponens.

\begin{theorem}
If ${\cal F}$ is closed under the construction of frames 
that makes $\langle W_0, R_0\rangle$ into $\langle W, R\rangle$
in the proof of `$\ref{item:D} \Rightarrow \ref{item:E}$' of Theorem~\ref{Th:Main},
then
$L^+$ is sound and complete with respect to the characterizations C1$^{\cal F}$--C4$^{\cal F}$.
\end{theorem}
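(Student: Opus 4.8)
The plan is to show that the five conditions $L^+\vdash\varphi$, $\text{C1}^{\cal F}$, $\text{C2}^{\cal F}$, $\text{C3}^{\cal F}$, $\text{C4}^{\cal F}$ are all equivalent by closing the cycle
\[
(L^+\vdash\varphi)\;\Rightarrow\;\text{C1}^{\cal F}\;\Rightarrow\;\text{C2}^{\cal F}\;\Rightarrow\;\text{C3}^{\cal F}\;\Rightarrow\;\text{C4}^{\cal F}\;\Rightarrow\;(L^+\vdash\varphi),
\]
mirroring the three nontrivial implications of Theorem~\ref{Th:Main} in the present, more abstract setting (the implications $\text{C1}^{\cal F}\Rightarrow\text{C2}^{\cal F}$ and $\text{C3}^{\cal F}\Rightarrow\text{C4}^{\cal F}$ stay trivial). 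The one ingredient replacing the sequent calculus is the standard deduction-theorem description of the quasi-normal closure: $L^+\vdash\varphi$ holds if and only if there are finitely many formulas $\alpha_1,\dots,\alpha_n$ with $L\vdash(\bigwedge_{i}(\Box\alpha_i\IMP\alpha_i))\IMP\varphi$. This is because an $L^+$-derivation uses only modus ponens and never necessitation, so the classical deduction theorem applies with the theorems of $L$ as axioms and the finitely many reflexivity instances actually occurring in the derivation as hypotheses.

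For soundness, $(L^+\vdash\varphi)\Rightarrow\text{C1}^{\cal F}$, I take the finite $\alpha_1,\dots,\alpha_n$ from this description and put $\SI=\{\Box\alpha_1,\dots,\Box\alpha_n\}$. Since $L$ is sound for ${\cal F}$, the formula $(\bigwedge_{i}(\Box\alpha_i\IMP\alpha_i))\IMP\varphi$ is valid on every ${\cal F}$-frame, so at any $\SI$-reflexive world of any ${\cal F}$-model the antecedent holds and hence so does $\varphi$. For $\text{C2}^{\cal F}\Rightarrow\text{C3}^{\cal F}$ the argument of Lemma~\ref{lm:reflexive} applies verbatim: its proof uses only that $w_j R w_n$ whenever $j>n$, and no transitivity or well-foundedness, so it holds for any ${\cal F}$-model and any sequence satisfying the generalized condition ``$w_n R w_m$ for all $n>m$''. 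Thus, given such a model and sequence, I choose $\SI$ from $\text{C2}^{\cal F}$ and take the maximum of the finitely many thresholds that the lemma supplies for the finitely many $\Box\alpha\in\SI$; past that index every $w_j$ is $\SI$-reflexive, whence $\varphi$ is true there.

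The substantive step is completeness, $\text{C4}^{\cal F}\Rightarrow(L^+\vdash\varphi)$, which I prove contrapositively. Assume $L^+\not\vdash\varphi$ and set $\SI=\{\Box\psi\mid\Box\psi\in\SF{\varphi}\}$ and $\chi=(\bigwedge_{\Box\psi\in\SI}(\Box\psi\IMP\psi))\IMP\varphi$. If $L\vdash\chi$ then, since each $\Box\psi\IMP\psi$ lies in $L^+$, modus ponens would yield $L^+\vdash\varphi$; hence $L\not\vdash\chi$. By completeness of $L$ for ${\cal F}$ there are an ${\cal F}$-model $\langle W_0,R_0,V_0\rangle$ and a world $x_0$ with $V_0(x_0,\chi)=\F$, that is, $x_0$ is $\SI$-reflexive and $V_0(x_0,\varphi)=\F$. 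I then feed $\langle W_0,R_0\rangle$ and $x_0$ into the construction from the proof of ``$\ref{item:D}\Rightarrow\ref{item:E}$'', with $x_0$ in place of the distinguished world $\SEQ{\PS^+}{\PH^+}$: put $W=W_0\cup{\mathbb N}$, extend $R_0$ so that each $n\in{\mathbb N}$ sees $x_0$, every $R_0$-successor of $x_0$, and every $m<n$, and set $V(n,p)=V_0(x_0,p)$. The closure hypothesis on ${\cal F}$ guarantees $\langle W,R\rangle\in{\cal F}$, so $\langle W,R,V\rangle$ is an ${\cal F}$-model; and $V$ agrees with $V_0$ on $W_0$ since no world of $W_0$ sees any natural.

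It remains to show $V(n,\varphi)=\F$ for all $n$, which I do by proving $V(n,\theta)=V(x_0,\theta)$ for every $\theta\in\SF{\varphi}$ and every $n$, by induction on $\theta$. The only nonroutine case is $\theta=\Box\psi$. If $V(x_0,\Box\psi)=\F$, some $R_0$-successor of $x_0$ refutes $\psi$ and is seen by $n$, so $V(n,\Box\psi)=\F$. If $V(x_0,\Box\psi)=\T$, then because $\Box\psi\in\SI$ and $x_0$ is $\SI$-reflexive we also get $V(x_0,\psi)=\T$; so $\psi$ holds at $x_0$, at all $R_0$-successors of $x_0$, and, by the induction hypothesis, at every $m<n$ --- exactly the worlds $n$ sees --- giving $V(n,\Box\psi)=\T$. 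This case is the heart of the matter and the step I expect to be the main obstacle: it is precisely where the proof of Theorem~\ref{Th:Main} used the syntactic (${\Box}$L)-saturation of $\SSEQ{\PS^+}{\PH^+}$, and the work is to recognize that the purely semantic property ``$x_0$ is $\SI$-reflexive for the boxed subformulas of $\varphi$'' does the same job, and that such an $x_0$ is exactly what refuting $\chi$ provides. Granting this, the naturals give a descending sequence $1\,R^{-1}\,2\,R^{-1}\,3\cdots$ (with $nRm$ for $n>m$) refuting $\varphi$ at every point, so $\text{C4}^{\cal F}$ fails and the cycle closes; the remaining bookkeeping, checking that the threshold argument of Lemma~\ref{lm:reflexive} and the frame construction survive in the weaker, possibly non-transitive frames of ${\cal F}$, is routine once this correspondence is in hand.
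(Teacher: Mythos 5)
Your proof is correct and follows essentially the same route as the paper's (very brief) sketch: the deduction-theorem fact relating $L^+$ to $L$, the completeness of $L$ yielding an ${\cal F}$-model with an $\SF{\varphi}$-reflexive world refuting $\varphi$, the same attachment of ${\mathbb N}$ to that world under the closure hypothesis, and the induction showing $V(n,\theta)=V(x_0,\theta)$ for $\theta\in\SF{\varphi}$. You have merely filled in the details (notably the $\Box\psi$ case via $\SI$-reflexivity, and the observation that Lemma~\ref{lm:reflexive} needs only $w_jRw_n$ for $j>n$) that the paper leaves implicit.
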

\begin{proof}[Proof (sketch).]
Similar to Theorem~\ref{Th:Main}.
In the above proof, we use the GL-model 
$\langle W_0, R_0, V_0\rangle$
and the world $\SEQ{\PS^+}{\PH^+}$ in $W_0$.
Here instead, we use 
an ${\cal F}$-model $\langle W_0^{L}, R_0^{L}, V_0^{L}\rangle$
and a world {\sf w}
which are obtained by the following fact.
\begin{align*}
L^+ \not\vdash \varphi
&\Longrightarrow
L \not\vdash
\bigwedge\{\Box\alpha\IMP\alpha \mid \Box\alpha \in \SF{\varphi}\} \IMP \varphi
\\
&\Longrightarrow
\mbox{there is an ${\cal F}$-model $\langle W_0^{L}, R_0^{L}, V_0^{L}\rangle$ and a world {\sf w} such that}
\\
&\qquad
V_0^{L}({\sf w}, \Box\alpha\IMP\alpha) = \T
\mbox{ for any } \Box\alpha \in \SF{\varphi}
\mbox{ and }
V_0^{L}({\sf w}, \varphi) = \F.
\end{align*}
Then we construct an ${\cal F}$-model $\langle W_0^{L}\cup{\mathbb N}, R^{L}, V^{L}\rangle$, 
and we show 
$V^{L}(n, \psi) = V_0^{L}({\sf w}, \psi)$
for any $n \in {\mathbb N}$ and any $\psi \in \SF{\varphi}$.
\end{proof}

\end{document}